\theoremstyle{plain}
\newtheorem{theorem}{Theorem}
\newtheorem{thm}[theorem]{Theorem}
\newtheorem{lem}[theorem]{Lemma}
\newtheorem{prop}[theorem]{Proposition}
\theoremstyle{definition}
\numberwithin{theorem}{section}
\numberwithin{equation}{section}
\newcommand*{\doo}{\partial}
\newcommand{\norm}[1]{\left\| #1 \right\|}
\DeclareMathOperator{\dist}{dist}
\DeclareMathOperator{\real}{Re}
\numberwithin{equation}{section}
\newcommand{\wt}{\widetilde}
\newcommand{\cj}{\overline}
\newcommand{\pa}{\partial}
\newcommand{\fr}{\frac}
\newcommand{\RR}{\mathbb R}
\newcommand{\NN}{\mathcal N}
\newcommand{\al}{\alpha}
\newcommand{\del}{\delta}
\def\vint_#1{\mathchoice%
          {\mathop{\kern 0.2em\vrule width 0.6em height 0.69678ex
depth -0.58065ex
                  \kern -0.8em \intop}\nolimits_{\kern -0.4em#1}}%
          {\mathop{\kern 0.1em\vrule width 0.5em height 0.69678ex
depth -0.60387ex
                  \kern -0.6em \intop}\nolimits_{#1}}%
          {\mathop{\kern 0.1em\vrule width 0.5em height 0.69678ex
              depth -0.60387ex
                  \kern -0.6em \intop}\nolimits_{#1}}%
          {\mathop{\kern 0.1em\vrule width 0.5em height 0.69678ex
depth -0.60387ex
                  \kern -0.6em \intop}\nolimits_{#1}}}
\def\vintslides_#1{\mathchoice%
          {\mathop{\kern 0.1em\vrule width 0.5em height 0.697ex depth -0.581ex
                  \kern -0.6em \intop}\nolimits_{\kern -0.4em#1}}%
          {\mathop{\kern 0.1em\vrule width 0.3em height 0.697ex depth -0.604ex
                  \kern -0.4em \intop}\nolimits_{#1}}%
          {\mathop{\kern 0.1em\vrule width 0.3em height 0.697ex depth -0.604ex
                  \kern -0.4em \intop}\nolimits_{#1}}%
          {\mathop{\kern 0.1em\vrule width 0.3em height 0.697ex depth -0.604ex
                  \kern -0.4em \intop}\nolimits_{#1}}}
\newcommand{\aveint}[2]{\mathchoice%
          {\mathop{\kern 0.2em\vrule width 0.6em height 0.69678ex
depth -0.58065ex
                  \kern -0.8em \intop}\nolimits_{\kern -0.45em#1}^{#2}}%
          {\mathop{\kern 0.1em\vrule width 0.5em height 0.69678ex
depth -0.60387ex
                  \kern -0.6em \intop}\nolimits_{#1}^{#2}}%
          {\mathop{\kern 0.1em\vrule width 0.5em height 0.69678ex
depth -0.60387ex
                  \kern -0.6em \intop}\nolimits_{#1}^{#2}}%
          {\mathop{\kern 0.1em\vrule width 0.5em height 0.69678ex
depth -0.60387ex
                  \kern -0.6em \intop}\nolimits_{#1}^{#2}}}
\title[Reconstructing obstacles using CGO solutions]{Reconstructing unknown inclusions for the biharmonic equation}
\keywords{Enclosure method, Biharmonic operator, inverse problems}
\subjclass[2020]{35R30}
\author{Gyeongha Hwang, Manas Kar}
\address{Department of Mathematics, Yeungnam University, 280 Daehak-Ro, Gyeongsan, Gyeongbuk 38541, Republic of Korea}
\email{ghhwang@yu.ac.kr}
\address{Department of Mathematics, Indian Institute of Science Education and Research (IISER) Bhopal, Bhopal-462066, India}
\email{manas@iiserb.ac.in}
\begin{document}

\maketitle

\begin{abstract}
Herein, we study an inverse problem for detecting  unknown obstacles by the enclosure method using the Dirichlet--to--Neumann map for measurements. We justify the method for an penetrable obstacle case involving a biharmonic equation. We use complex geometrical optics solutions with a logarithmic phase to reconstruct some non--convex parts of the obstacle. 

 \end{abstract}
\section{Introduction}
The inverse problem in this article involves determining an unknown obstacle or jump in the inclusions embedded in a known background medium via near--field measurements. There are several methods proposed to understand the reconstruction issue for inverse obstacle problems. We can mention  the linear sampling method of Colton-Kirsch \cite{Colton:Kirsch:1996}, probe method of Ikehata \cite{MR1642619}, factorization method of Kirsch \cite{MR1662460}, enclosure method of Ikehata \cite{MR1830379}, and singular sources method of Potthast \cite{MR1737077}. In these classical analytic methods, the probe and singular sources methods use an approximation of singular solutions to probe a region of interest; whereas the linear sampling and factorization methods use the far-field pattern of the fundamental solution as a given part of the far-field equation. 
The enclosure method employs the complex geometrical optics solutions (CGO) which were used in the Calder\'on method and the uniqueness proof by Sylvester-Uhlmann \cite{MR0873380}, full coefficient reconstruction by Nachman \cite{MR0970610}. In contrast to the probe and singular sources method, the CGO solutions are given by a constructive method. The enclosure method can be classified in two types. In the first type, one needs to have an infinite measurement and the second type uses a single or at most two measurements. An extensive literature of this topic can found in the survey article \cite{MR1830379}. In this work, we use CGO solutions to discuss the enclosure method for the zeroth--order perturbation of the biharmonic--type operator.
  
Let $\Omega \subset \RR^3$ be a bounded, smooth domain. We assume $D (\subset\subset \Omega)$ to be an unknown obstacle with a $C^2$--regular boundary $\partial D$, such that $\Omega\setminus \overline{D}$ is connected. As a model problem, we consider the zeroth--order perturbation of the biharmonic equation with the Navier boundary condition: 
\begin{align}\label{main_eqn}
\left\{
\begin{array}{ll}
\Delta^2 u + \wt n u = 0 &\text{in } \Omega\\
u = f_1 & \text{on } \partial \Omega\\
\Delta u = f_2 & \text{on } \partial \Omega.
\end{array}
\right.
\end{align}
We assume that $\wt n(x) = 1 + n_D(x)\chi_D(x)$, for all $x \in \Omega$, such that $\wt n\in L^{\infty}(\Omega)$. Here, $\chi_D$ is the characteristic function of $D$. Let us also assume that $n_D \in L^\infty_+(D)$, where
$
L^\infty_+(D) := \{f\in L^{\infty}(D) ; f\geq C>0 \ \text{for some $C\in \mathbb{R}$} \}.
$ 


By the standard well--posedness of the boundary value problem for the fourth--order elliptic equation, problem \eqref{main_eqn} has a unique solution $u\in H^4(\Omega)$ for any $f_1 \in H^{7/2}(\doo\Omega)$ and $f_2 \in H^{3/2}(\doo\Omega)$; see Lemma \ref{regularityAPP}. We could refer  \cite{Gazzola} for some applications of this type of model. We define the Dirichlet--to--Neumann map corresponding to the biharmonic problem above as follows:
\[
\NN_{D} : H^{7/2}(\pa\Omega) \times H^{3/2}(\pa\Omega) \to H^{5/2}(\pa\Omega) \times H^{1/2}(\pa \Omega)
\]
by 
\[
\NN_{D} \left(f_1, f_2\right) = \left(\fr{\pa u}{\pa \nu}|_{\pa \Omega}, \fr{\pa}{\pa \nu}(\Delta u)|_{\pa \Omega}\right),
\]
where $u$ is the solution to \eqref{main_eqn}, and $\nu$ denotes the outward unit normal vector to $\pa \Omega$.
The inverse problem in this study is to determine the shape and location of $D$ from the knowledge of the Dirichlet--to--Neumann map $\NN_{D}$ measured at boundary $\partial\Omega$.

We mainly applied the enclosure method proposed by Ikehata \cite{Ikehata:2000} to solve the inverse problem. See also \cite{Ikehata:1999} for the enclosure method using a single input. In \cite{Ikehata:2000}, the author considered the inverse problem for a conductivity equation as a model problem and used complex geometrical optics solutions with a linear phase to detect the convex hull of an obstacle. Many studies have been conducted on detecting unknown obstacles using this enclosure method for various other types of partial differential equations. See, for example the Maxwell system \cite{Kar:Sini:2014, MR3396397, Zhou:2010}, elasticity equation \cite{Kar:SIni:elas, MR2853548}, Helmholtz equation \cite{Sini:Yoshida:2012, MR3195891, MR2367868, MR2765688, MR2390978}, \cite{MR3040565}, and a review paper \cite{MR3098659}. 
We could also mention the works \cite{MR4245819, MR4281493} in case of complex conductivity and \cite{MR3562270} to deal with the anisotropic conductivity.
Meanwhile, the authors \cite{MR3773773, MR3810150, MR3320025} studied inverse problems for detecting unknown inclusions for $p$-Laplace type model by utilising enclosure method.
In \cite{Sini:Yoshida:2012}, the authors considered the problem of determining the unknown obstacle for divergence form elliptic equations with lower--order terms from the knowledge of the Dirichlet--to--Neumann map. The result was proved using Meyers $L^p$--estimates \cite{Meyers1963} to eliminate some geometrical assumptions on the obstacle surface. The ideas of using $L^p$--estimates to prove the enclosure method for the reconstruction, have also been implemented in \cite{Kar:SIni:elas} and \cite{Kar:Sini:2014}. 

The Calder\'on problem, in particular, to prove its uniqueness issue corresponding to the biharmonic operator was initiated by Isakov for general $L^{\infty}$ potential \cite{MR1120907}, Ikehata for general $L^p$ potential \cite{MR1127214}. 
The inverse
problem corresponding to the biharmonic operator was  studied by Krupchyk--Lassas--Uhlmann \cite{KRUPCHYK20121781} to prove the unique determination of the first--order perturbation $-i^{-1}A(x)\cdot \nabla + q$ of the biharmonic operator from the Dirichlet--to--Neumann map measured on a part of the boundary using the CGO solution with nonlinear phase. Further results on the inverse problems for the biharmonic equation can be found in various studies; see, for example, \cite{YK}, \cite{MR3118392}, \cite{MR3484382}, and the references therein.

The enclosure method is based on the asymptotic behavior of $I_{x_0}(h,t)$. The main idea behind this method is as follows. First, we define an indicator function, $I_{x_0}(h, t)$, as in \eqref{indicatorFUN}. The indicator function represents the energy difference between when an obstacle is in $\Omega$ and when no obstacle is in $\Omega$. Subsequently, an asymptotic estimate of the indicator function for a small parameter, $h>0$, is studied (see Theorem \ref{main thm}). This indicates whether the level set of the phase function touches the obstacle surface. Finally, the intersection of all level sets touching the interface determines convex hull of the obstacle and its non--convex part.

Most of our efforts are devoted to the proof of Theorem \ref{main thm}. Because of Lemma \ref{Pf::1}, providing the lower and upper bounds of the indicator function when $t = h_D(x_0)$ is sufficient. Because of Lemma \ref{lowerESTim}, an appropriate estimate of the corresponding reflected solution $w := u-v$ is required, where $u$ satisfies \eqref{main_eqn}, and $v$ is the CGO solution of the Equation \eqref{CGOEQUation}. In this study, we verified that the reflected solution $w$ of Equation \eqref{gap_eqn} satisfies
\begin{equation}\label{starr}
\|w\|_{L^2(\Omega)} \leq C \| v\|_{L^1(D)},
\end{equation}
see Proposition \ref{lwb::prop1}.
To justify the enclosure method, we must construct the CGO solutions with an appropriate decay estimate in the correction term. We used the CGO solutions proposed in \cite{KRUPCHYK20121781}, and constructed CGO solutions of the form
\begin{equation}\label{CGOfirst}
v(x;h) = e^{\frac{\phi + i\psi}{h}} \left(a_0(x) + ha_1(x) + r(x,h)\right),
\end{equation}
where $\phi \in C^{\infty}(\wt {\Omega}, \mathbb{R})$ is a limiting Carleman weight for the semiclassical Laplacian on $\wt {\Omega}$, where $\Omega\subset\subset\wt {\Omega}$. Functions $a_0$ and $a_1$ are smooth, and the correction term $r$ satisfies $\norm{r}_{H_{scl}^{4}(\Omega)} = \mathcal{O}(h^2).$ 

The remainder of this paper is organized as follows. In Section \ref{mainresultsection}, we state our main results and discuss the CGO solutions that are useful in our proof. Section \ref{proofsection} provides proof of the main results. Finally, Section \ref{appendix} presents the existence and uniqueness of the boundary value problem for the fourth-order elliptic equation.
 \section*{Acknowledgement}
M. Kar was supported by the MATRICS grant (MTR/2019/001349) from SERB. 
G. Hwang was supported by the 2020 Yeungnam University research grant.

\section{Main result}\label{mainresultsection}
In this section, we present our main results. We first discuss the CGO solutions for the following fourth--order elliptic equation: 
\begin{align}\label{main eqn2}
\mathcal{L}_{q}v := (\Delta^2 + q(x))v = 0 \text{ in } \Omega,
\end{align}
where $ q \in L_{+}^\infty(\Omega)$. 
The CGO solutions of the form 
$$
v(x;h) = e^{\fr{\phi + i \psi}{h}}(a_0(x) + ha_1(x) + r(x;h)),
$$
were derived in \cite{KRUPCHYK20121781} to determine the first--order perturbation of the biharmonic operator.
Here, function $a_0$ solves the first transport equation
\begin{align}\label{trs_eqn1}
T^2a_0 = 0 \text{ in } \Omega,
\end{align}
where $T = (\nabla \phi + i \nabla \psi)\cdot \nabla + \fr12(\Delta\phi + i\Delta \psi)$.
In addition, function $a_1$ solves the second transport equation in $\Omega$:
\begin{align}\label{trs_eqn2}
T^2a_1 = -\fr12(\Delta \circ T + T \circ \Delta)a_0 + \frac{1}{4} A\cdot(i^{-1}\nabla\phi +\nabla\psi)a_0 \text{ in } \Omega.
\end{align}
 The Carleman weight is of the form
$$\phi(x) = \fr12 \log|x-x_0|^2$$
and
\begin{align}\label{psIII}
\psi(x) 
&= \fr{\pi}{2} - \tan^{-1} \fr{\omega\cdot(x-x_0)}{\sqrt{(x-x_0)^2 - (\omega\cdot(x-x_0))^2}} \\
&= \dist_{\mathbb{S}^{n-1}}\left(\fr{x-x_0}{|x-x_0|},\omega\right),
\end{align}
where $\omega \in \mathbb{S}^{n-1}$ is chosen such that $\psi$ is smooth near $\cj \Omega$, and $x_0$ is a fixed point outside the convex hull of $\Omega$.
In particular, in \cite{KRUPCHYK20121781}, they proved the following proposition.
\begin{prop}\label{SUMMerCGOpro}[Proposition 2.4 in \cite{KRUPCHYK20121781}]
Let $ q \in L_{+}^\infty(\Omega)$. Then, for an $h>0$ sufficiently small, there exist solutions $v(x;h) \in H^4(\Omega)$ to the equation:
\begin{equation}\label{praMCGO}
\Delta^2v + qv = 0 \ \text{in}\ \Omega
\end{equation}
of the form
$
v(x;h) = e^{\frac{\phi + i\psi}{h}}(a_0(x) + ha_1(x) + r(x;h)),
$
where $\phi$ is a limiting Carleman weight for the semi--classical Laplacian, and $\psi$ is defined as \eqref{psIII}. The amplitudes $a_0\in C^{\infty}(\overline{\Omega}) $ and $ a_1\in C^4(\overline{\Omega})$ satisfy Equations \eqref{trs_eqn1} and \eqref{trs_eqn2}, respectively, and the correction term $r$ satisfies
$\|r\|_{H^4_{scl}} = \mathcal{O}(h^2).$
\end{prop}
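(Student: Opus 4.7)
The plan is to carry out the WKB construction sketched in the discussion preceding the proposition, combining it with the Carleman solvability result in Proposition~\ref{lassProp} to control the correction term. The three main steps are: (i) substitute the ansatz into the equation and conjugate by $e^{-(\phi+i\psi)/h}$, grouping terms by powers of $h$; (ii) solve the resulting cascade of transport equations \eqref{trs_eqn1}--\eqref{trs_eqn3} for $a_0,a_1,a_2$ in cylindrical complex coordinates; (iii) apply Proposition~\ref{lassProp} to the remainder equation \eqref{cor_eqn} to obtain $r\in H^4(\Omega)$ with $\|r\|_{H^4_{scl}}=O(h^3)$.

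For step (i), inserting $v=e^{(\phi+i\psi)/h}(a_0+ha_1+h^2a_2+r)$ into $h^4\LL_{A,q}v=0$ and using $e^{-(\phi+i\psi)/h}h^4\LL_{A,q}e^{(\phi+i\psi)/h}=(h^2\Delta+2hT)^2+h^3A\cdot hD+h^3A\cdot(D\phi+iD\psi)+h^4\kappa^2 q$, the $h^2$-coefficient vanishes iff $T^2a_0=0$, the $h^3$-coefficient vanishes iff $a_1$ solves \eqref{trs_eqn2}, and the $h^4$-coefficient vanishes iff $a_2$ solves \eqref{trs_eqn3}. The leftover terms are collected as the source in \eqref{cor_eqn} and are of size $O(h^5)$ in $L^2$; this is precisely the gain we need over the $O(h^4)$ source in \cite{KRUPCHYK20121781}, and it is bought by introducing the third amplitude $a_2$.

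For step (ii), I would pass to cylindrical coordinates $(x_1,r\theta)$ and the complex variable $z=x_1+ir$, in which $T=\tfrac{2}{z}\bigl(\pa_{\cj z}-\tfrac{n-2}{2(z-\cj z)}\bigr)$ as computed in the excerpt. The choice $a_0=(z-\cj z)^{(2-n)/2}g_0$ with $g_0$ holomorphic in $z$ kills one factor of $T$ and hence solves \eqref{trs_eqn1_1}. For the second transport equation one splits $T^2a_1=F$ as two consecutive first-order problems $(\pa_{\cj z}-\tfrac{n-2}{2(z-\cj z)})u=\text{RHS}$; using the integrating factor $e^{-g}$ with $\pa_{\cj z}g=\tfrac{n-2}{2(z-\cj z)}$, each reduces to an inhomogeneous Cauchy--Riemann equation solvable by the Cauchy--Pompeiu formula, yielding $a_1\in C^4(\cj\Omega)$. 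The identical procedure applied to \eqref{trs_eqn3_1} delivers $a_2\in C^4(\cj\Omega)$. The hypothesis $A\in C^4(\cj\Omega,\CC^n)$ is used exactly to ensure that the sources in \eqref{trs_eqn2} and \eqref{trs_eqn3} have enough regularity for this $C^4$ claim.

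For step (iii), with $a_0,a_1,a_2$ fixed the right-hand side of \eqref{cor_eqn} lies in $L^2(\Omega)$ with norm $O(h^5)$. Proposition~\ref{lassProp} then produces $r\in H^4(\Omega)$ solving \eqref{cor_eqn} with $\|r\|_{H^4_{scl}}\leq Ch^{-2}\cdot O(h^5)=O(h^3)$. The main subtlety I expect is tracking the regularity flow through the transport cascade: the $C^4$-smoothness of $a_2$ feeds on derivatives of $a_1$ which in turn feed on fourth-order derivatives of $a_0$ via $\Delta^2 a_0$, so $a_0$ is taken smooth and $A$ is assumed $C^4$ precisely so that all of these manipulations are classical and the remainder equation makes pointwise sense before passing to the $H^4$ solvability statement.
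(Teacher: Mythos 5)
Your proposal is correct and follows essentially the same route as the paper: substitute the ansatz, conjugate to obtain the $h$-expansion, read off the three transport equations from the $h^2$, $h^3$, $h^4$ coefficients, solve them in the complex cylindrical variable $z=x_1+ir$ via $\bar\partial$-inversion, and then apply Proposition~\ref{lassProp} to the $O(h^5)$ remainder in \eqref{cor_eqn} to obtain $\|r\|_{H^4_{scl}}=O(h^3)$. Your identification of the extra amplitude $a_2$ as the source of the improved $O(h^5)$ right-hand side (hence $O(h^3)$ remainder, versus $O(h^2)$ in \cite{KRUPCHYK20121781}) and the remark that $A\in C^4$ is exactly what keeps $a_1,a_2\in C^4(\cj\Omega)$ both agree with the paper's reasoning.
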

 Note that, for a given $h>0$ and $k\in \mathbb{N},$ the semi--classical norm of $r$ is defined as follows:
\[
\|r\|_{H_{scl}^{k}(\Omega)} := \left[ \sum_{|\alpha|\leq k} \int_{\Omega}|(hD)^{\alpha}u|^2 dx\right]^{1/2}.
\]
See [\cite{MR2952218}, Chapter 7] for an extensive study on these spaces and their properties.
Let $t$ be a constant and $h>0$ a small parameter. Maintaining the same notation as in Proposition \ref{SUMMerCGOpro}, we define 
\begin{equation}\label{cgo_So}
    v(x,h,t) = e^{\frac{1}{h}\left(t-\frac{1}{2}\log |x-x_0|^2\right)-\frac{i}{h}\psi(x)}(a_0(x) + ha_1(x) + r(x;h))
\end{equation}
to be a complex geometric optics solution with spherical phases for Equation \eqref{praMCGO}. 
 Using the CGO solutions with spherical phases, we define an indicator function as follows:
\begin{align}\label{indicatorFUN}
I_{x_0}(h, t) 
= \langle (\mathcal{N}_{D} - \mathcal{N}_{\emptyset})f,f \rangle 
= \int_{\doo\Omega} (\mathcal{N}_{D} - \mathcal{N}_{\emptyset})f\cdot \overline{f} dS,
\end{align}
where $dS$ denotes the surface measure of $\doo\Omega$. Here, $\mathcal{N}_{D}$ is the Dirichlet--to--Neumann map corresponding to the solution $u$ of problem \eqref{main_eqn}, and $\mathcal{N}_{\emptyset}$ denotes the Dirichlet--to--Neumann map corresponding to
a CGO solution with a spherical phase function, as described in Equation \eqref{cgo_So}, which satisfies
\begin{equation}\label{CGOEQUation}
\begin{aligned}
\left\{
\begin{array}{ll}
\Delta^2 v + v = 0 &\text{ in } \Omega\\
v = f_1 &\text{ on }\pa \Omega\\
\Delta v = f_2 &\text{ on }\pa \Omega.
\end{array}
\right.
\end{aligned}
\end{equation}
Note that $f$ is a vector--valued function defined as
$
f = (f_1, f_2) 
= \left( u|_{\pa \Omega}, (\Delta u)|_{\pa \Omega}\right) 
= \left(v|_{\pa \Omega}, (\Delta v)|_{\pa \Omega}\right),
$
such that the boundary values corresponding to \eqref{main_eqn} and \eqref{CGOEQUation} are the same. $\langle \mathcal{N}_{D}(f), f \rangle$ can be defined as follows:
\begin{align*}
 \langle \mathcal{N}_{D}(f), f \rangle 
&= \int_{\pa \Omega} \langle(\fr{\pa u}{\pa \nu}|_{\pa \Omega}, \fr{\pa}{\pa \nu} (\Delta u)|_{\pa \Omega}), (\cj f_1, \cj f_2) \rangle dS\\
&= \int_{\pa \Omega} \left(\fr{\pa u}{\pa \nu}\cj f_2 + \fr{\pa}{\pa \nu} (\Delta u)\cj f_1\right) dS.
\end{align*}

We introduce the distance function as 
$
h_D(x_0) := \inf_{x\in D} \frac{1}{2}\log |x-x_0|^2,
$ 
where $x_0 \in \mathbb{R}^3\setminus \overline{ch(\Omega)}$, and $ch(\Omega)$ denotes the convex hull of the domain $\Omega$. Note that, $e^{h_D(x_0)}$ measures the distance from $x_0$ to $D$.

We are now ready to formulate the main theorem of this study.
\begin{thm}\label{main thm}
Let $x_0 \in \mathbb{R}^3\setminus\overline{ch(\Omega)}$. Then there exist constants $c, \ C > 0$ independent of $h$ such that the indicator function $ I_{x_0}(h, t)$ satisfies 
\begin{equation}\label{eq_Impr}
    c\leq |h^{-3}\real I_{x_0}(h, h_D(x_0))| \leq \ C h^{-2}, \ h \ll 1.
\end{equation}
Moreover, it holds that
\begin{equation}\label{eq_gmrtt}
t-h_D{(x_0)} = \lim_{h \to 0} \frac{1}{2}h \log | \real I_{x_0}(h,t)|.
\end{equation}
\end{thm}
Using \eqref{eq_Impr} and \eqref{eq_gmrtt}, we can easily prove the following: 
\begin{enumerate}
\item When $t < h_D(x_0),$ we have
\[|h^{-3}\real I_{x_0}(h, t)| \leq Ce^{-\frac{c}{h}}, \ h \ll 1.\]
In particular, $\lim_{h \to 0} |\real I_{x_0}(h, t)| = 0$.
\item When $t > h_D(x_0),$ we have
\[|h^{-3}\real I_{x_0}(h, t)| \geq C e^{\frac{c}{h}}, \ h \ll 1.\]
In particular, $\lim_{h \to 0} |\real I_{x_0}(h, t)| = \infty$.
\end{enumerate}

From this theorem, we obtain the certain asymptotic behavior of the indicator function required to reconstruct the unknown obstacle from the boundary data. Specifically, let us fix a point $x_0 \in \mathbb{R}^3\setminus\overline{ch(\Omega)}.$ Then, we observe that the complex geometric optics solution (see Proposition \ref{SUMMerCGOpro}) exhibits an asymptotic behavior, that is, it grows exponentially inside the sphere
$
S = \{x\in \mathbb{R}^{2}  ; |x-x_0|= e^{t} \}
$
for a sufficiently small $h>0$ and decays exponentially fast outside the sphere. Using this feature of the CGO solution, we can observe that, when $t<h_D(x_0)$, the indicator function $I_{x_0}(h,t)$ vanishes exponentially for a sufficiently small $h>0.$ Now, we can expand the sphere such that when time $t\geq h_D(x_0)$, the obstacle intersects the sphere, and by Theorem \ref{main thm}, the indicator function becomes large for a small $h$. Finally, moving point $x_0$ around $\overline{ch(\Omega)}$, we can enclose the unknown obstacle using the spheres. In this manner, we can recover the convex hull of the obstacle and some of its non--convex part.  

\section{Proof of the Theorem \ref{main thm}}\label{proofsection}
In this section, we provide a proof of Theorem \ref{main thm}.
By the definition of the indicator function and complex geometric optics solutions, it follows that
\begin{equation}\label{Pf::1}
  I_{x_0}(h, t) = e^{\frac{2}{h}(t-h_D(x_0))}I_{x_0}(h, h_D(x_0)).  
\end{equation}
Because of \eqref{Pf::1}, proving Equation \eqref{eq_Impr} in Theorem \ref{main thm} is sufficient. Let us recall the integration by parts formula, which is often useful in estimates. For any $\phi\in H^4(\Omega)$
and $\psi\in H^2(\Omega),$ Green's theorem holds as follows:
\begin{align}\label{INPART}
 \int_\Omega \nabla \cdot \nabla (\Delta \phi) \overline{\psi} dx \nonumber 
&= - \int_{\Omega} \nabla( \Delta \phi)\cdot \overline{\nabla \psi} dx + \int_{\pa \Omega} \fr{\pa}{\pa \nu} ( \Delta \phi)\overline{\psi} dS \nonumber \\
&= \int_{\Omega} ( \Delta \phi)\overline{\Delta\psi} dx - \int_{\pa \Omega} \Delta \phi \overline{\fr{\pa\psi}{\pa \nu}}dS + \int_{\pa \Omega} \fr{\pa}{\pa \nu} ( \Delta \phi)\overline{\psi} dS.
\end{align}

\subsection{Lower and upper bound of $I_{x_0} (h, h_D(x_0))$}
Let $v$ be a CGO solution of the biharmonic equation
\begin{equation}\label{modi_eqn}
\begin{aligned}
\left\{
\begin{array}{ll}
\Delta^2 v + v = 0 &\text{ in } \Omega\\
v = f_1 &\text{ on }\pa \Omega\\
\Delta v = f_2 &\text{ on }\pa \Omega.
\end{array}
\right.
\end{aligned}
\end{equation}
Let $w := u-v$ be the reflected solution, where $u$ is the solution to problem \eqref{main_eqn}. Then, $w$ satisfies the following boundary value problem:
\begin{equation}\label{gap_eqn}
\begin{aligned}
\left\{
\begin{array}{ll}
\Delta^2 w  +  \wt n(x)w = 
 - (\wt n - 1)v &\text{ in } \Omega\\
w = 0 &\text{ on } \pa \Omega\\
\Delta w = 0 &\text{ on } \pa \Omega.
\end{array}
\right.
\end{aligned}
\end{equation}

The main step in proving the lower and upper bounds of $I_{x_0}(h, h_D(x_0))$ is to prove the following proposition.
\begin{prop}\label{lwb::prop1}
Let $\Omega$ be a smooth domain in $\mathbb{R}^3$ and the inclusion $D$ to be strictly embedded inside $\Omega$. Then, there exists $C>0$ such that
\[\|w\|_{L^2(\Omega)}\leq C \| v\|_{L^1(D)}.\]
\end{prop}

\begin{proof}
Let us define a function space $$ X:= \left\{ \phi \in H^4(\Omega); \phi = \Delta \phi = 0\text{ on }\pa \Omega\right\}.$$
Suppose $\Phi \in X$ is a weak solution of the equation
\begin{equation}\label{cond_Phi}
\begin{aligned}
\left\{
\begin{array}{ll}
\Delta^2 \Phi  +  \wt n \Phi = w & \text{ in } \Omega\\
\Phi = 0 &\text{ on } \pa \Omega\\
\Delta \Phi = 0 &\text{ on } \pa \Omega,
\end{array}
\right.
\end{aligned}
\end{equation}
where $w$ satisfies Equation \eqref{gap_eqn}.
By multiplying Equation \eqref{cond_Phi} by $\overline{w}$ and integrating by parts, we obtain,
\begin{align*}
& \int_\Omega |w(x)|^2dx 
 = \int_\Omega \nabla \cdot \nabla ( \Delta \Phi)\overline{w(x)}dx + \int_\Omega \wt n(x) \Phi(x) \overline{w(x)} dx\\
& \qquad= \int_{\Omega} \Delta \Phi \overline{\Delta w} dx- \int_{\pa \Omega} \Delta\Phi \overline{\fr{\pa w}{\pa \nu}}dS +  \int_{\doo\Omega}\frac{\doo}{\doo\nu}(\Delta\Phi)\overline{w(x)}dS \\ 
& \qquad \qquad + \int_\Omega \wt n(x) \Phi(x) \overline{w(x)}dx.
\end{align*}
Because $w=0, \Delta \Phi = 0$ on $\pa \Omega$, the above identity becomes
\begin{align}
\int_\Omega |w(x)|^2dx  = \int_{\Omega} \Delta \Phi \overline{\Delta w} dx + \int_\Omega \wt n(x) \Phi(x) \overline{w(x)}dx. \label{cons}
\end{align}
By multiplying Equation \eqref{gap_eqn} by $\Phi$ and integrating by parts, we obtain
\begin{equation}\label{cons2}
\begin{aligned}
\int_\Omega  \Delta \overline{w}\Delta \Phi  dx +  \int_\Omega \wt n(x)\overline{ w(x)}\Phi(x) dx  =-\int_{\Omega} (\wt n -1) \overline{v}  \Phi dx. 
\end{aligned}
\end{equation}
Then, by combining the real parts of Equation \eqref{cons} and \eqref{cons2}, we obtain
\begin{align*}
\|w\|_{L^2(\Omega)}^2 
&=  - \real\int_D n_D \overline{v} \Phi dx. 
\end{align*}
Using the Cauchy-Schwarz inequality, we obtain
\begin{equation}\label{cons3}
\begin{aligned}
 \|w\|_{L^2(\Omega)}^2 
&\leq C  \|v\|_{L^1(D)}\|\Phi\|_{L^{\infty}(D)}.
\end{aligned}
\end{equation}
Now, we apply the Sobolev embedding and elliptic estimate (see Lemma \ref{regularityAPP}) to obtain
\begin{equation}\label{cons4}
\begin{aligned}
\|\Phi\|_{L^{\infty}(D)} \leq \|\Phi\|_{L^{\infty}(\Omega)}
\leq C\|\Phi\|_{H^{2}(\Omega)} 
  \leq C\|w\|_{L^{2}(\Omega)}.
\end{aligned}
\end{equation}
Finally, the conclusion follows from Equations \eqref{cons3} and \eqref{cons4}.
\end{proof}

\begin{lem}\label{lowerESTim}
Assume that functions $v$ and $w$ are the solutions to Equations \eqref{modi_eqn} and \eqref{gap_eqn}, respectively. Then, we obtain:
\begin{itemize}
\item[(1)] the lower bound of the indicator function
\begin{align*}
|\real I_{x_0} (h, t)| 
&\geq C\int_D |v(x)|^2 dx - c\int_D |w(x)|^2 dx ,
\end{align*}
\item [(2)] and the upper bound of the form
\[
|\real I_{x_0} (h, t)|  \leq C\int_D |v(x)|^2 dx + c\int_D |w(x)|^2 dx,
\]
\end{itemize}
where $C $ and $ c>0$ are constants.
\end{lem}
\begin{proof}
Let us recall that $\mathcal{N}_{D}$ denotes the Dirichlet--to--Neumann map that encodes the current measurement on the boundary $\partial\Omega$ corresponding to the boundary voltage $u = f$ prescribed on $\partial\Omega$, when an obstacle $D$ is embedded in the domain $\Omega.$ We write the weak form of $\mathcal{N}_{D}$ as:
\begin{equation}\label{weak_id}
\begin{aligned}
 \langle \mathcal{N}_{D}f, f \rangle
&= \int_{\pa \Omega} \langle (\fr{\pa u}{\pa \nu}, \fr{\pa}{\pa \nu}(\Delta u)), \overline{(f_1, f_2)} \rangle dS\\
&= \int_{\pa \Omega} \left(\fr{\pa u}{\pa \nu}\overline{f_2} + \fr{\pa}{\pa \nu}(\Delta u)\overline{f_1}\right) dS,
\end{aligned}
\end{equation}
where $u$ satisfies Eqaution \eqref{main_eqn}. Moreover, we denote $\mathcal{N}_{\emptyset}$ as the Dirichlet--to--Neumann map when no obstacle is in $\Omega$. It has the following weak form:
\[
\langle \mathcal{N}_{\emptyset} f, f \rangle = \int_{\pa \Omega} \left(\fr{\pa v}{\pa \nu}\overline{f_2} + \fr{\pa}{\pa \nu}(\Delta v)\overline{f_1}\right),
\]
where $v$ satisfies Equation \eqref{modi_eqn}.
By multiplying problem \eqref{main_eqn} by $\overline{v}$ and integrating by parts, we obtain
\begin{align}
0 &= \int_\Omega \nabla \cdot \nabla ( \Delta u)\overline{v} dx + \int_{\Omega} \wt n u\overline{v} dx  \nonumber \\
&= \int_\Omega  \Delta u \Delta \overline{v} dx - \int_{\pa \Omega}f_2 \fr{\pa \overline{v}}{\pa \nu} dS + \int_{\pa \Omega} \fr{\pa}{\pa \nu} (\Delta u)\overline{f_1} dS +  \int_{\Omega} \wt n u\overline{v} dx 
\label{barvmu}.
\end{align}
Using Equation \eqref{barvmu}, the Dirichle--to--Neumann map can be written as
\begin{align}
\langle \mathcal{N}_{D}f, f \rangle
&= \int_{\pa \Omega} \left(\fr{\pa u}{\pa \nu}\overline{f_2} + \fr{\pa\overline{v}}{\pa \nu}f_2\right) dS - \int_{\Omega}\Delta u \Delta\overline{v} dx  - \int_{\Omega}\wt{n}u\overline{v} dx. \label{dnmagna}
\end{align}
Moreover, by multiplying
Equation \eqref{modi_eqn} by $\overline{u}$ and integrating by parts, we obtain 
\begin{align}
0&= \int_{\Omega} (\Delta^2 v)\overline{u} dx +  \int_{\Omega}\overline{u}v dx \nonumber\\
&= \int_\Omega \Delta \overline{u }\Delta v dx - \int_{\pa \Omega} f_2 \fr{\pa \overline{u}}{\pa \nu} dS + \int_{\pa \Omega} \fr{\pa}{\pa \nu}(\Delta v)\overline{f_1} dS +  \int_\Omega \overline{u}v dx \label{mulbaru}.
\end{align}
By taking the real part of Equation \eqref{mulbaru}, we compute the following Dirichlet--to--Neumann map:
\begin{align}
\real \langle \mathcal{N}_{\emptyset}f, f \rangle
& = \real \int_{\pa\Omega} \frac{\pa v}{\pa\nu} \overline{f_2} dS+  \real \int_{\pa\Omega} \frac{\pa (\Delta \overline{v})}{\pa\nu} f_1 dS  \nonumber \\
& = \real \int_{\pa\Omega} \frac{\pa v}{\pa\nu} \overline{f_2} dS - \real \int_\Omega \Delta u \Delta \overline{v}  dx \nonumber 
\\
& \qquad \qquad + \real  \int_{\pa \Omega}\overline{f_2}\fr{\pa u}{\pa \nu} dS -  \real \int_{\Omega} u\overline{v} dx. \label{dnmap101}
\end{align}
Then, Equations \eqref{dnmagna} and \eqref{dnmap101} provide that
\begin{align}
-\real I_{x_0}(h, t) &= -\real \int_{\pa \Omega} \langle (\mathcal{N}_{D} - \mathcal{N}_{\emptyset})f,f \rangle dS \nonumber\\
&= \real \int_{\Omega} (\wt{n}-1)u\overline{v} dx.  \label{indiman}
\end{align}
By applying the Cauchy-Schwartz inequality to Equation \eqref{indiman}, we obtain the upper bound of $-I_{x_0}(h,t)$.

To estimate the lower bound of $-\real I_{x_0} (h,t),$ we use Cauchy's $\epsilon$ inequality (see [\cite{Evans:1998}, Appendix]), as follows:  
\begin{align*}
-\real I_{x_0}(h,t)
=  \real \int_{\Omega} (\wt{n}-1)u\overline{v} dx
\geq C\int_D|v|^2dx  - c\int_{D}|w|^2dx,
\end{align*}
where $C $ and $ c>0$ are positive constants.
Finally, conclusions can be derived.
\end{proof}

\subsection{End of the proof of Theorem \ref{main thm}}

In this part, we follow \cite{MR2367868} for the $C^2$ regular unknown obstacle.
We provide a detailed estimate for the complex geometrical optics solutions related to the bi-Laplace equation.
Let $B(\alpha, \delta)$ denote a ball of radius $\delta$ centered at $\al$. We define,
\[
K := \pa D \cap \{x\in \mathbb{R}^3 ; \frac{1}{2}\log|x-x_0|^2 = h_D(x_0)\},
\]
where $x_0 \in \mathbb{R}^3\setminus \overline{ch({\Omega})}.$ The set $K$ can be covered by open covers, i.e., $K \subset \cup_{\al \in K}B(\al, \del)$. Because $K$ is compact, $\al_1, \cdots, \al_N \in K$ exist, such that $K \subset \cup_{j=1}^N B(\al_j, \delta)$. Then, we define $D_{j, \delta} := D \cap B(\al_j, \delta)$ and $D_\delta = \cup_{j=1}^N D_{j, \delta}$. Note that,
$$
\int_{D\setminus \overline{D_\delta}} e^{-\frac{p}{h}(\frac{1}{2}\log|x-x_0|^2-h_D(x_0))}dx = \mathcal{O}(e^{-\frac{pc}{h}}) \text{ as } h \to 0.
$$
We introduce a change of co-ordinates as in \cite{MR2367868},
\[
y' = x', \qquad
y_3 = \frac{1}{2}\log|x-x_0|^2 - h_D(x_0),
\]
where $x' = (x_1, x_{2}), \ y'=(y_1, y_2), \ x = (x', x_3), $ and $ y= (y',y_3)$.
By the $C^2$ assumption on $D$, there exist positive constants $K_1, K_2$ such that
\[
K_1\left|y^{\prime}\right|^2 \leq l_j\left(y^{\prime}\right) \leq K_2\left|y^{\prime}\right|^2
\]
where $l_j\left(y^{\prime}\right)$ is the parametrization of $\partial D$ near $\alpha_j$.
Then, we have the following lemma.

\begin{lem}\label{estiMATEs}
The following upper and lower estimates hold for $h \ll 1$:
\begin{enumerate}
\item For $1 \leq q < 2$, we have 
 \begin{align*}
\int_D |v(x)|^qdx \leq &C h \left(\sum_{j=1}^N \int_{|y'| < \delta} e^{-\frac{q l_j(y')}{h}} dy' + \left(\sum_{j=1}^N \int_{|y'| < \delta} e^{-\frac{2q l_j(y')}{(2-q)h}} dy'\right)^{\frac{2-q}{2}}\right)\\
& + \text{exponentially decaying terms}.
 \end{align*}
\item When $q=2$, it follows that
\begin{align*}
&\int_D |v(x)|^2dx \leq C h \sum_{j=1}^N \int_{|y'| < \delta} e^{-\frac{2 l_j(y')}{h}} dy' + \text{exponentially decaying terms}\\
&\text{and}\\
&\int_D |v(x)|^2dx \geq C h \sum_{j=1}^N \int_{|y'| < \delta} e^{-\frac{2 l_j(y')}{h}} dy' + \text{exponentially decaying terms}.
\end{align*}
\end{enumerate}
\end{lem} 

\begin{proof}
Recall that, when $t= h_D(x_0),$ the complex geometrical optics solution $v$ is of the form
$
v(x,h) = e^{\fr{\phi + i\psi}{h}} (a_0(x) + ha_1(x) + r(x,h))
$
where $\phi = h_D(x_0) - \frac{1}{2}\log|x-x_0|^2.$ 
In addition, the correction term $r$ satisfies
\[\|r\|_{H^4_{scl}(\Omega)} \leq \mathcal{O}(h^2).\]
Simplifying this, we obtain
\begin{equation}
\|r\|_{L^2(\Omega)} \leq h^2.
\end{equation}
(1) We first compute the following integral using the H\"older inequality. 
\begin{align*}
& \int_D |v(x)|^q dx\\
&\leq C \int_D e^{-\fr{q}{h}(\frac{1}{2}\log|x-x_0|^2 - h_D(x_0))} (a_0^q + h^qa_1^q + r^q) dx\\
&= \left(\int_{D_\delta} + \int_{D\setminus D_\delta}\right) e^{-\fr{q}{h}(\frac{1}{2}\log|x-x_0|^2 - h_D(x_0))} (a_0^q + h^qa_1^q  + r^q) dx\\
&\leq C(1+h^q) \sum_{j=1}^N \int_{|y'| < \delta} dy' \int_{l_j(y')}^\delta e^{-\fr{qy_n}{h}}dy_n\\
&\quad + C\left(\int_{D_\delta} e^{-\frac{qp}{h}(\frac{1}{2}\log|x-x_0|^2 - h_D(x_0))}dx\right)^{\fr{1}{p}}\left(\int_{D_\delta}r^2\right)^{\fr{q}{2}} + Ce^{-\frac{qc}{h}},
\end{align*}
where $p = \fr{2}{2-q}$. Then, the estimate $\|r\|_{L^2(\Omega)} \leq h^2$ yields
\begin{align*}
& \int_D |v(x)|^q dx \\
&\leq C(h+h^{q+1})\sum_{j=1}^N \int_{|y'| < \delta} e^{-\fr{ql_j(y')}{h}} dy'+ h^{2q + \fr{2-q}{2}}\left(\sum_{j=1}^N \int_{|y'| < \delta} e^{-\fr{qpl_j(y')}{h}} dy'\right)^{\frac{1}{p}}\\ 
&\quad - C\fr{(1+h^q)h}{q}e^{-\fr{q\delta}{h}} + Ch^{2q+\fr{2-q}{2}}e^{-\fr{q\delta}{h}} + Ce^{-\frac{qc}{h}}.
\end{align*}
Here, we observe that\footnote{Here, $o(\cdot)$ denotes small $o$ notation.}
\[h^{q+1} \leq o(h) \text{ and } h^{2q + \fr{2-q}{2}} \leq o(h)\]
for a sufficiently small $h$, and the last three terms are exponentially decaying. Therefore, it follows that
 \begin{align*}
\int_D |v(x)|^qdx \leq &C h \left(\sum_{j=1}^N \int_{|y'| < \delta} e^{-\frac{q l_j(y')}{h}} dy' + \left(\sum_{j=1}^N \int_{|y'| < \delta} e^{-\frac{qp l_j(y')}{h}} dy'\right)^{\frac{1}{p}}\right)\\
& + \text{exponentially decaying terms}.
 \end{align*}
(2) We then compute the upper bound estimate of the $L^2$-norm of $v$.
\begin{align*}
&\int_{D} |v(x)|^2 dx \\
&\leq C \int_D e^{-\fr{2}{h}(\frac{1}{2}\log|x-x_0|^2 - h_D(x_0))}dx + C h^2\int_D e^{-\fr{2}{h}(\frac{1}{2}\log|x-x_0|^2 - h_D(x_0))}dx\\
&\quad + C \int_D e^{-\fr{2}{h}(\frac{1}{2}\log|x-x_0|^2 - h_D(x_0))} r^2dx\\
&\quad := I_1 + I_2 + I_3.
\end{align*}
For $I_1$ and $I_2$, we have
\begin{align*}
I_1 &:= \int_D e^{-\fr{2}{h}(\frac{1}{2}\log|x-x_0|^2 - h_D(x_0))}dx\\
&\leq \left(\int_{D_\delta} + \int_{D\setminus D_\delta}\right) e^{-\fr{2}{h}(\frac{1}{2}\log|x-x_0|^2 - h_D(x_0))}dx\\
&\leq \int_{D_\delta} e^{-\fr{2}{h}(\frac{1}{2}\log|x-x_0|^2 - h_D(x_0))}dx + \text{exponentially decaying terms}\\
&\leq C \sum_{j=1}^N \int_{|y'|< \delta} dy' \int_{l_j(y')}^{\delta} e^{-\fr{2y_n}{h}}dy_n + \text{exponentially decaying terms} \\
&\leq Ch \sum_{j=1}^N \int_{|y'| < \delta} e^{-\fr{2l_j(y')}{h}} dy'  + \text{exponentially decaying terms},
\end{align*}
and
\begin{align*}
I_2 &:= h^2 \int_D e^{-\fr{2}{h}(\frac{1}{2}\log|x-x_0|^2 - h_D(x_0))}dx\\
&= h^2 \left(\int_{D_\delta} + \int_{D\setminus D_\delta}\right) e^{-\fr{2}{h}(\frac{1}{2}\log|x-x_0|^2 - h_D(x_0))}dx\\
&\leq ch^3 \sum_{j=1}^N \int_{|y'|<\delta} e^{-\fr{2l_j(y')}{h}}dy' + \text{exponentially decaying terms}.
\end{align*}
Now, we estimate the remainder.
The H\"older inequality yields
\begin{align*}
I_3 &:= \int_D e^{-\fr{2}{h}(\frac{1}{2}\log|x-x_0|^2 - h_D(x_0))}r^2 dx\\
& \leq C\|r\|_{L^2(\Omega)}^2 \leq Ch^4.
\end{align*}
Furthermore, we observe that for $h \ll 1$,
\begin{align*}
\int_{|y'| < \delta} e^{-\fr{2l_j(y')}{h}dy'} & \geq C \sum_{j=1}^N \int_{|y'| < \delta} e^{-\fr{2|y'|}{h}dy'} \\
& \geq Ch^{2} \sum_{j=1}^N \int_{|y'| < \frac{\delta}{h}} e^{-2|y'|}dy'\\
& \geq Ch^{2}.
\end{align*}
Here, we use $l_j(y') \leq K_2 |y'|$ if $\partial D$ is $C^{2}$. Hence, we obtain 
\begin{align*}
I_3 \leq Ch^2\int_{|y'| < \delta} e^{-\fr{2l_j(y')}{h}dy'}.
\end{align*}
Because the first term $I_1$ dominates the remaining terms, it follows that
\[\int_D |v(x)|^2dx \leq C h \sum_{j=1}^N \int_{|y'| < \delta} e^{-\fr{2l_j(y')}{h}dy'} + \text{exponentially decaying terms}.\]

For the lower bound estimate of the $L^2$-norm of $v$, we observe that 
\begin{align*}
&\int_{D} |v(x)|^2 dx \\
&\geq C \int_D e^{-\fr{2}{h}(\frac{1}{2}\log|x-x_0|^2 - h_D(x_0))}dx - C h^2\int_D e^{-\fr{2}{h}(\frac{1}{2}\log|x-x_0|^2 - h_D(x_0))}dx\\
&\quad - C \int_D e^{-\fr{2}{h}(\frac{1}{2}\log|x-x_0|^2 - h_D(x_0))} r^2dx\\
& = I_1 - I_2 - I_3.
\end{align*}
For $I_1$, we have
\begin{align*}
I_1 &= \int_D e^{-\fr{2}{h}(\frac{1}{2}\log|x-x_0|^2 - h_D(x_0))}dx \\
&\geq \int_{D_\delta} e^{-\fr{2}{h}(\frac{1}{2}\log|x-x_0|^2 - h_D(x_0))}dx\\
&\geq C \sum_{j=1}^N \int_{|y'|< \delta} dy' \int_{l_j(y')}^{\delta} e^{-\fr{2y_n}{h}}dy_n \\
& \geq Ch \sum_{j=1}^N \int_{|y'| < \delta} e^{-\fr{2l_j(y')}{h}} dy'  - \fr{C}{2}he^{-\fr{2\delta}{h}}.
\end{align*}
Therefore, with the previous estimates for $I_2$ and $I_3$, we conclude that
\[\int_D |v(x)|^2dx \geq C h \sum_{j=1}^N \int_{|y'| < \delta} e^{-\fr{2l_j(y')}{h}dy'} + \text{exponentially decaying terms}.\]

\end{proof}

\begin{proof}[Proof of Theorem \ref{main thm}]
We first prove 
\[ \fr{\|w\|_{L^2(D)}^2}{\| v\|_{L^2(D)}^2} \leq Ch, \ \ h \ll 1.\] 
Proposition \ref{lwb::prop1} gives that
\begin{align*}
   \fr{\|w\|_{L^2(D)}^2}{\| v\|_{L^2(D)}^2} \leq C \fr{\|v\|_{L^1(D)}^2}{\| v\|_{L^2(D)}^2}. 
\end{align*}
Using Lemma \ref{estiMATEs} with an elementary inequality
\[
\sum_{j=1}^N \int_{|y'| < \delta} e^{-\fr{l_j(y')}{h}dy'} 
\leq C\left(\sum_{j=1}^N \int_{|y'| < \delta} e^{-\fr{2l_j(y')}{h}dy'}\right)^{1/2},
\]
we get
\begin{align*}
&\frac{\left(\int_{D}|v|dx\right)^2}{\int_D |v|^2 dx} \\
&\leq C \frac{h^2\left( \sum_{j=1}^N \int_{|y'| < \delta} e^{-\fr{l_j(y')}{h}dy'}\right)^2 + \text{exponentially decaying terms}}{h \sum_{j=1}^N \int_{|y'| < \delta} e^{-\fr{2l_j(y')}{h}dy'} + \text{exponentially decaying terms}} \\
& \leq C h 
\frac{ \sum_{j=1}^N \int_{|y'| < \delta} e^{-\fr{2l_j(y')}{h}dy'} + \text{exponentially decaying terms}}{ \sum_{j=1}^N \int_{|y'| < \delta} e^{-\fr{2l_j(y')}{h}dy'} + \text{exponentially decaying terms}} \\
& = \mathcal{O}(h) \qquad (h\to 0). 
\end{align*}
Therefore, from Lemma \ref{lowerESTim} (1), we obtain
\begin{align*}
    \fr{|\real I_{x_0}(h,h_D(x_0))|}{\int_D|v|^2dx} 
    \geq C - c \fr{\int_D|w|^2dx}{\int_D|v|^2dx} 
    \geq C - ch 
    \geq C, \qquad (h\to 0).
\end{align*}
Using Lemma \ref{estiMATEs} (2), and 
\begin{align*}
\sum_{j=1}^N\int_{|y'| < \delta} e^{-\fr{2l_j(y')}{h}dy'} \geq  Ch^2,
\end{align*}
we have
\[\int_D |v(x)|^2dx \geq Ch^{3}, \ \ h \ll 1. \]
Therefore, it follows that
\[
|\real I_{x_0}(h,t)| \geq C \int_D|v(x)|^2 dx \geq Ch^3,
\]
which yields
\[
|h^{-3}\real I_{x_0}(h,t)| \geq C >0 \qquad \ \text{for}\ h\ll 1.
\]

For the upper bound, we use Lemma \ref{lowerESTim} (2), Proposition \ref{lwb::prop1}, and Lemma \ref{estiMATEs} (1) as follows:
\begin{align*}
    |\real I_{x_0}(h,t)| 
    & \leq C\int_{D}|v(x)|^2 dx + C\int_{D}|w(x)|^2 dx \\
    & \leq C\int_{D}|v(x)|^2 dx + \left( \int_{D}|v(x)| dx\right)^2 \\
    & \leq Ch \sum_{j=1}^N \int_{|y'| < \delta} e^{-\frac{2 l_j(y')}{h}} dy'\\
    &+ h^2 \left(\sum_{j=1}^N \int_{|y'| < \delta} e^{-\frac{ l_j(y')}{h}} dy' + \left(\sum_{j=1}^N \int_{|y'| < \delta} e^{-\frac{2 l_j(y')}{h}} dy'\right)^{\fr{1}{2}}\right)^2\\
& + \text{exponentially decaying terms}\\
& \leq Ch.
\end{align*}

\end{proof}

\section{Appendix}\label{appendix}
We provide a detailed proof of the $L^2$ regularity estimate of the solutions for the bi-Laplace equation with non-smooth coefficients in the $n$-dimensional domain.
\begin{lem}\label{regularityAPP}
We assume $\Omega\subset\mathbb{R}^n, n\geq 3$ to be an open bounded set with a sufficiently smooth regular boundary. Let $u$ be a solution of the following fourth--order elliptic equation:
\begin{equation}\label{appregulll}
\begin{aligned}
\left\{
\begin{array}{ll}
\Delta^2 u  +  \tilde{n}(x)u = f &\text{ in } \Omega\\
u = h_1 &\text{ on } \pa \Omega\\
\Delta u = h_2 &\text{ on } \pa \Omega,
\end{array}
\right.
\end{aligned}
\end{equation}
where coefficient $\tilde{n}(x) \in L^{\infty}_{+}(\Omega)$.
Then, for any $f\in L^2(\Omega)$, $ h_1 \in H^{7/2}(\doo\Omega)$ and $h_2 \in H^{3/2}(\doo\Omega)$, there exists a unique solution $u\in H^4(\Omega)$ to Equation \eqref{appregulll} such that
\[
\norm{u}_{H^4(\Omega)} \leq C \left[\norm{f}_{L^2(\Omega)} + \norm{h_1}_{H^{7/2}(\doo\Omega)} + \norm{h_2}_{H^{3/2}(\doo\Omega)}\right],
\]
where $C>0$ is a constant independent of the data.
\end{lem}

\begin{proof}
Define the Sobolev space 
\[
H^2(\Omega) := \{u\in L^2(\Omega) ; D^{\alpha}u \in L^2(\Omega), \ \text{for}\ |\alpha|\leq 2\}
\]
with the usual Sobolev norm
\[
\|u\|_{H^2(\Omega)} = \sum_{|\alpha|=2}\|D^{\alpha}u\|_{L^2(\Omega)}. 
\]
We now define another norm
\[
|||u||| := \|\Delta u\|_{L^2(\Omega)}. 
\]
Notice that these two norms $\|\cdot\|$ and $|||\cdot|||$ are equivalent for all $u\in H^2(\Omega)\cap H_{0}^{1}(\Omega)$. It follows by combining interpolation inequalities (see [\cite{Evans:1998}, Section 5.10, Problem 9])
\[
\|\nabla u\|_{L^2(\Omega)}^{2} \leq C \| u\|_{L^2(\Omega)}\|D^2 u\|_{L^2(\Omega)},
\]
the Poincar\'e inequality $\|u\|_{L^2(\Omega)} \leq \|\nabla u\|_{L^2(\Omega)}$, and the identity $$\|D^2 u\|_{L^2(\Omega)}=\|\Delta u\|_{L^2(\Omega)},$$ (see [\cite{Gilbarg:Trudinger:1983}, Corollary 9.10]). Therefore, the space $H_{0}^{2}(\Omega)$ can be defined as the closure of $C_{c}^{\infty}(\Omega)$ with respect to the norm $|||\cdot|||$. See [\cite{Gazzola}, Chapter 2] for more details. We now consider the following homogeneous boundary value problem
\begin{equation}\label{homo}
\begin{aligned}
\left\{
\begin{array}{ll}
\Delta^2 u  +  \tilde{n}(x)u = f &\text{ in } \Omega\\
u = 0 &\text{ on } \pa \Omega\\
\Delta u = 0 &\text{ on } \pa \Omega.
\end{array}
\right.
\end{aligned}
\end{equation}
We then define the bilinear form 
\[
\mathcal{L} : H^2(\Omega)\cap H_{0}^{1}(\Omega) \times H^2(\Omega)\cap H_{0}^{1}(\Omega) \rightarrow \mathbb{R}
\]
by 
\[
\mathcal{L} (u,v) := \langle u,v \rangle = \int_{\Omega}\Delta u \Delta v dx + \int_{\Omega}\tilde{n}(x)u v dx.
\]
It is easy to verify that $\mathcal{L}$ is bounded $$|\mathcal{L} (u,v)| \leq C \|\Delta u\|_{L^2(\Omega)}\|\Delta v\|_{L^2(\Omega)}$$ as well as coercive: $|\mathcal{L} (u,u)| \geq C\|\Delta u\|_{L^2(\Omega)}^2$ for all $u, v \in H^2(\Omega)\cap H_{0}^{1}(\Omega)$. By the Lax-Milgram lemma, for any $f\in L^2(\Omega)$, there is a unique weak solution $u\in H^2(\Omega)\cap H_{0}^{1}(\Omega)$ to Equation \eqref{homo} such that 
\[
\norm{u}_{H^2(\Omega)} \leq C \norm{f}_{L^2(\Omega)},
\]
where $C>0$ is a constant.
To obtain a strong solution to Equation \eqref{homo}, the interior and boundary regularity results can be used \cite{Grubb}. In this case, we obtain an $H^4(\Omega)$ solution to Equation \eqref{homo} and estimate of the form
\[
\norm{u}_{H^4(\Omega)} \leq C \norm{f}_{L^2(\Omega)}.
\]
To prove the well--posedness of Equation \eqref{appregulll}, we first reduce the problem to a homogeneous form. Let us define $v := u - \tilde{h}_1$ such that $\tilde{h}_1|_{\partial\Omega} = h_1$ and $\tilde{h}_2|_{\partial\Omega} = h_2$. Then, Equation \eqref{appregulll} can be reduced to
\begin{equation}\label{homo2}
\begin{aligned}
\left\{
\begin{array}{ll}
\Delta^2 v  +  \tilde{n}(x)v = f - \tilde{n}\tilde{h}_1 - \Delta \tilde{h}_2&\text{ in } \Omega\\
v = 0 &\text{ on } \pa \Omega\\
\Delta v = 0 &\text{ on } \pa \Omega.
\end{array}
\right.
\end{aligned}
\end{equation}
Because the trace maps $T_{1} : H^4(\Omega) \rightarrow H^{7/2}(\Omega)$ by $T_{1}(u) = u|_{\partial\Omega}$
and $T_{2} : H^2(\Omega) \rightarrow H^{3/2}(\Omega)$ by $T_{2}(\Delta u) = (\Delta u)|_{\partial\Omega}$
are surjective and have bounded inverses, the functions $\tilde{h}_1 \in H^4(\Omega)$ and $\Delta\tilde{h}_2 \in H^2(\Omega)$, respectively. Therefore, the $H^4(\Omega)$ estimates for the homogeneous Equation \eqref{homo} imply that 
\[
\norm{u}_{H^4(\Omega)} \leq C \left[\norm{f}_{L^2(\Omega)} + \norm{h_1}_{H^{7/2}(\doo\Omega)} + \norm{h_2}_{H^{3/2}(\doo\Omega)}\right].
\]

\end{proof}


\bibliographystyle{plain}
\bibliography{BiLaplace_Enclosure}

\end{document}